\author{Alberto G. Raggi-C\'ardenas\footnote{Partially supported by a grant from CONACyT, Project B0291  \textit{Funtores de tipo Burnside}.}$\, $  and Nadia Romero$^*$\\
\begin{normalsize}
Instituto de Matem\'aticas, UNAM, A.P. 61-3,\end{normalsize}\\
\begin{normalsize}
C.P. 58089, Morelia, Michoac\'an, M\'exico.\end{normalsize}\\
\begin{normalsize}
E-mail: \texttt{graggi@shi.matmor.unam.mx}, \texttt{nadia@matmor.unam.mx}\end{normalsize}
}
\title{On primordial groups for the Green ring}
\date{ }
\newcommand{\pp}{\textrm{Prim}}
\newcommand{\gal}{\mathcal Gal}
\newcommand{\zze}{\mathbb Z}
\newcommand{\triv}{\textrm{triv}}
\newcommand{\ov}[1]{\overline{#1}}
\newcommand{\gr}{\begin{large}\textit{a}\end{large}}
\newcommand{\ind}[2]{\!\uparrow_{#1}^{#2}}
\newcommand{\res}[2]{\downarrow_{#1}^{#2}}
\theoremstyle{plain}
\newtheorem{thm}{Theorem}[section]
\newtheorem{pro}[thm]{Proposition}
\newtheorem{coro}[thm]{Corollary}
\newtheorem{lemma}[thm]{Lemma}
\theoremstyle{definition}
\newtheorem{defi}[thm]{Definition}
\newtheorem{nota}[thm]{Notation}
\theoremstyle{remark}
\begin{document}

\maketitle

\begin{abstract} 
Consider the Mackey functor assigning to each finite group $G$ the Green ring of finitely generated $kG$-modules, where $k$ is a field of characteristic $p>0$.  Th\'evenaz foresaw in 1988 that the class of primordial groups for this functor is the family of $k$-Dress groups. In this paper we prove that this is true for the subfunctor defined by the Green ring of finitely generated $kG$-modules of trivial source.
\end{abstract}

\textit{Keywords:} Green ring, primordial group, trivial source module.

\section{Introduction}
For a field $k$, the Green ring of the category of finitely generated $kG$-modules, $\gr(kG)$, is by definition, spanned over $\zze$ by elements $[M]$, one for each isomorphism class of finitely generated $kG$-modules and with structures given by $[M]+[N]=[M\oplus N]$ and $[M][N]=[M\otimes_k N]$. The subring generated by the $kG$-modules of trivial source (defined in Section 2.1) is denoted by $\gr(kG,\, \triv)$.

Assigning to each finite group $G$ either of the two above-mentioned rings defines a  
globally defined Mackey functor, as described in Bouc \cite{bouc} and Webb \cite{webb}. These functors are denoted  by $\gr(k\,\_\,)$ and $\gr(k\,\_\,, \triv)$. Based on Section 3 of Webb's paper \cite{webb}, 
we suggest that the concept of primordial group can be defined for any globally defined Mackey functor. 
In terms of our two functors this concept is expressed in a familiar way: let $M$ be either $\gr(k\,\_\,)$ or $\gr(k\,\_\,, \triv)$. Then 
a group $G$ is called primordial for $M$ if $M(G)/T(G)\neq 0$ with
\begin{displaymath}
 T(G)=\sum_{\substack{H\hookrightarrow G\\ H\ncong G}}tr_H^GM(H).
\end{displaymath}
This definition also works  for the Mackey functor $G_0(k\,\_\,)$, which assigns to $G$ the Grothendieck group of finitely generated $kG$-modules. We write Prim$(M)$ for the class of primordial groups for $M$.

Primordial groups were first studied by Dress \cite{dress}  
in the context of Green functors for a finite group $G$. Th\'evenaz \cite{thev}  proved that for such a functor $N$, the closure under conjugation and subgroups of the primordials for $N$ is the minimal set $\mathscr D$ of subgroups of $G$ satisfying $N(G)=\sum_{H\in \mathscr D}tr_H^GN(H)$. 
Th\'evenaz also proved that if $k$ is a field of characteristic $p>0$, the primordial groups for $\mathbb Q\otimes \gr(kG)$ are the $p$-hypoelementary subgroups of $G$ (see Section 2 for definitions). Also, he conjectured that the primordials for $G_0(kG)$ were the $k$-elementary subgroups of $G$, which was proved in 1989 by Raggi \cite{raggi}, and that an analogue of the Brauer-Berman-Witt Theorem (5.6.7, in Benson \cite{benson}) should hold for $\gr(kG)$. The main result of this work, Theorem \ref{thethm}, states that this is true for the subring of trivial source modules.


Recent work in the subject of induction can be found in the papers of Boltje 
\cite{boltje} 
and  Co\c skun 
\cite{coskun}.
It is important to mention that  some results about primordial groups can be generalized to the context of globally defined Mackey functors, as for example Lemma \ref{prims} that shows a general behavior of primordial groups of subfunctors. 

\section{Preliminaries}

From now on, we assume that $k$ is a field of characteristic $p>0$,  all modules are finitely generated, and all groups are finite.

Recall that for a group $G$ and a prime $r$, $O^r(G)$ is defined as the smallest normal subgroup of $G$, such that $G/O^r(G)$ is a $r$-group, and $O_r(G)$ is the largest normal $r$-subgroup of $G$.

\begin{defi} If $q$ and $r$ are primes, a group $H$ is called $q$-\textit{hyperelementary} if $O^q(H)$ is cyclic,
 and $r$-\textit{hypoelementary} if $H/O_r(H)$ is cyclic.
\end{defi}
Observe that $H$ is $q$-hyperelementary if and only if $H=C\rtimes Q$  with $Q$ a $q$-group and $C$ a cyclic group of order prime to $q$, and it is $r$-hypoelementary if and only if $H=D\rtimes C$ where $D$ is a $r$-group and $C$ is cyclic of order prime to $r$. It is easy to prove that the classes of $q$-hyperelementary and $r$-hypoelementary groups are closed under subgroups and quotients.

\begin{nota}
 \label{res}
We write $\mathbb Z_m^*$ for the smallest non negative representatives of the multiplicative group of units modulo $m$
(which we denote by $(\zze/m\zze)^*$).
\end{nota}

\begin{defi}
 \label{defkele}
Suppose $H=C\rtimes Q$ is a $q$-hyperelementary group with \mbox{$C=<x>$} of order $m$ prime to $p$. The group $H$ is called \textit{$k$-elementary} if the action of every $y\in Q$ on $x$ is given by $yxy^{-1}=x^a$ with $a\in I_m(k)$, where $I_m(k)\subseteq \zze_m^*$ is the set of smallest non negative representatives of the image of $\gal(k(\omega)/k)$ under the injective morphism
\begin{eqnarray*}
 \gal(k(\omega)/k) & \longrightarrow & (\zze/m\zze)^*\\
         \sigma    & \longmapsto         &   \ov{a}
\end{eqnarray*}
if $\sigma (\omega)=\omega^a$ with $1\leq a\leq m-1$, $(a, m)=1$ and $\omega$ is a primitive $m$-th root of unity.

We write $\mathcal{E}_k$ for the class of $k$-elementary groups.
\end{defi}

Note that in the latter definition we can replace $I_m(k)$ by $I_n(k)$ where $n$ is any multiple of $m$.

\begin{defi}
\label{defdress}
For a prime $q$, a group $H$ is called $q$-\textit{Dress} if $O^q(H)$ is $p$-hypoelementary.

It is easy to see that $O^q(H)$ is $p$-hypoelementary if and only if $H/O_p(H)$ is $q$-hyperelementary. 
If in addition to this, $H/O_p(H)$ is $k$-elementary, then $H$ is called \textit{$k$-Dress} for $q$.
\end{defi}

The class of $q$-Dress groups is closed under subgroups and quotients, and it is denoted by $Dr_q$. The class of groups which are $k$-Dress for some prime will be denoted by $Dr_k$.
We write $Dr_p^*$ for the class of $k$-Dress groups such that $p$ divides the order of $H/O_p(H)$; that is, $H/O_p(H)=C\rtimes D$  is $k$-elementary with $D$ being a nontrivial $p$-group.

\begin{nota}
Let $G$ be a group and  $H$ a subgroup of $G$. If $M$ is a $kH$-module, we will write $M\!\ind{H}{G}$ for the induced module
 $kG\otimes_{kH}M$. If $N$ is a $kG$-module, the restriction of $N$ to $kH$ will be denoted by $N\!\res{H}{G}$.

\end{nota}

\subsection{Trivial source modules}

The following facts on trivial source modules are well known and can be found in Benson \cite{benson} and Curtis and Reiner \cite{curtis}.

Recall that for a $kG$-module it is equivalent  to be $(G,\, H)$-projective and to be a direct summand of $L\!\uparrow_H ^G$ for some $kH$-module $L$. If $M$ is an indecomposable $kG$-module, there exists a subgroup $D$ for which $M$ is $(G,\, D)$-projective and $D\leqslant_GH$ for any  subgroup $H$ for which $M$ is $(G,\, H)$-projective, such a group $D$ is called a \textit{vertex} of $M$. In this case, if $L$ is an indecomposable $kD$-module such that $M$ is a direct summand of $L\!\ind{D}{G}$, then  $L$ is called a \textit{source} of $M$. The module $M$ is said to have \textit{trivial source} if the field $k$ is a  source of $M$, which is equivalent to say that $M$ is a direct summand of a permutation module.

It can  be proved that any two vertices of $M$ are conjugate in $G$, and that any two sources of $M$ are conjugate by an element in $N_G(D)$. Since we are assuming that $k$ is a field of characteristic $p$, a vertex of $M$ is a $p$-subgroup of $G$.

We prove the following property of trivial source modules, which will be used later.

\begin{lemma}
 \label{tensorind}
 The tensor induction of a trivial source module is  a trivial source module.
\end{lemma}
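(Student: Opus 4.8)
My plan is to reduce the statement to the characterization of trivial source modules recalled above, namely that a $kH$-module has trivial source exactly when it is a direct summand of a permutation module (equivalently, when each of its indecomposable summands has trivial source). Write $T_H^G$ for the tensor (multiplicative) induction from a subgroup $H$ of $G$. Given a trivial source $kH$-module $M$, I choose a finite $H$-set $X$ and a $kH$-module $M'$ with $k[X]\cong M\oplus M'$, and aim to show that $T_H^G(M)$ is a direct summand of a permutation $kG$-module; note that $T_H^G(M)$ need not be indecomposable, so this is genuinely what has to be established.

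First I would record the key positive fact that tensor induction sends permutation modules to permutation modules: for a finite $H$-set $X$ there is a $kG$-isomorphism $T_H^G(k[X])\cong k[\,\mathrm{Ten}_H^G X\,]$, where $\mathrm{Ten}_H^G X$ is the tensor induced $G$-set. Explicitly, after fixing representatives $g_1,\dots,g_n$ of the left cosets of $H$ in $G$, the underlying set of $\mathrm{Ten}_H^G X$ is $X^n$, with the $i$-th coordinate attached to the coset $g_iH$, and $g\in G$ acts as follows: writing $gg_i=g_{\sigma(i)}h_i$ with $\sigma\in S_n$ and $h_i\in H$, the element $g$ carries the $i$-th coordinate to the $\sigma(i)$-th position and replaces its entry $x$ by $h_ix$. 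Since $X$ is finite, $\mathrm{Ten}_H^G X$ is a finite $G$-set, so $T_H^G(k[X])$ is a permutation module. This is standard and can also be read off the tensor induction map on Burnside rings.

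Next, using the same coset representatives, I would expand $T_H^G(k[X])=\bigotimes_{i=1}^n V_i$, with $V_i$ the copy of $k[X]=M\oplus M'$ attached to $g_iH$, distributively into $\bigoplus_{\varphi}W_\varphi$ over the $2^n$ functions $\varphi\colon\{1,\dots,n\}\to\{M,M'\}$, where $W_\varphi=\bigotimes_{i=1}^n N_i^\varphi$ and $N_i^\varphi=M$ if $\varphi(i)=M$, $N_i^\varphi=M'$ otherwise. By the description of the action above, $g\in G$ takes the $i$-th tensor slot to the $\sigma(i)$-th one via multiplication by $h_i\in H$; since $M$ and $M'$ are $kH$-submodules of $k[X]$, this carries $W_\varphi$ isomorphically onto $W_{\varphi\circ\sigma^{-1}}$. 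Thus $G$ permutes the summands $W_\varphi$, and the summand indexed by the constant function $\varphi\equiv M$ is a $kG$-submodule that, by construction, coincides with $T_H^G(M)$. Hence $T_H^G(M)$ is a direct summand of the permutation module $T_H^G(k[X])$, and therefore has trivial source.

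The step I expect to need the most care is the last one: one has to be certain that the twisted permutation action of $G$ on the tensor factors of $T_H^G(k[X])$ really does respect the decomposition $k[X]=M\oplus M'$, and this works precisely because the twisting elements $h_i$ lie in $H$ and so preserve the $kH$-module splitting. No use of the transitivity of tensor induction is needed, though transitivity would give an alternative route, reducing to the case of subgroups of prime index.
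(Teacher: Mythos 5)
Your proof is correct and follows essentially the same route as the paper: both show that tensor induction sends permutation modules to permutation modules and then use the decomposition of $T_H^G(M\oplus M')$ into the summands $W_\varphi$ permuted by $G$ (the paper cites Benson, Proposition 3.15.2(iii), for exactly the computation you write out) to exhibit $T_H^G(M)$ as a direct summand of a permutation module. The only difference is that you make Benson's argument explicit rather than citing it.
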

\begin{proof}
We denote the tensor induction from $H$ to $G$ by $\,{\ind{H}{G}}^{\otimes}$. Let $B$ be a $kH$-module of trivial source, then  it is a direct summand of a permutation module, say $\oplus_{a\in [H/K]}k=B\oplus A$. From the proof of Proposition 3.15.2 $iii)$ in  Benson \cite{benson}, it is easy to see  that the tensor induction of a permutation module is  a permutation module. By the same proposition, on the right hand side we obtain \mbox{$B\,{\ind{H}{G}}^{\otimes} \oplus A\,{\ind{H}{G}}^{\otimes} \oplus X$}, where $X$ is a sum of modules induced from proper subgroups of $G$. 
\end{proof}

The following corollary to the Green Indecomposability Theorem, 19.22 in Curtis and Reiner \cite{curtis}, will be used in the following sections, as well as the next lemma. Recall that $k$ is a field of characteristic $p$.

\begin{coro}[19.23 in Curtis and Reiner \cite{curtis}]
 \label{green}
Suppose that $G$ is a $p$-group and $H$ is an arbitrary subgroup. If $L$ is an absolutely indecomposable $kH$ module (i.e $k'\otimes L$ is indecomposable for all $k'$ field extension of $k$), then $L\!\uparrow_H^G$ is an absolutely indecomposable $kG$-module.
\end{coro}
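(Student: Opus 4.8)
The plan is to deduce the corollary from the Green Indecomposability Theorem (19.22 in Curtis \cite{curtis}) by exploiting that in a $p$-group every subgroup is subnormal. Since $G$ is a finite $p$-group and $H\leqslant G$, there is a chain
\[
H=H_0\trianglelefteq H_1\trianglelefteq\cdots\trianglelefteq H_r=G
\]
in which each quotient $H_{i+1}/H_i$ is a nontrivial $p$-group; one may even refine it so that $[H_{i+1}:H_i]=p$. This is the standard normalizer argument: a proper subgroup of a $p$-group is properly contained in its normalizer, so the chain is built by induction on $[G:H]$. I would then induct on the length $r$, the case $r=0$ being vacuous.

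For the inductive step, transitivity of induction gives $L\ind{H}{G}\cong(L\ind{H_0}{H_1})\ind{H_1}{G}$. As $H_0\trianglelefteq H_1$ with $H_1/H_0$ a $p$-group, Theorem 19.22 shows that $L\ind{H_0}{H_1}$ is an absolutely indecomposable $kH_1$-module. Applying the inductive hypothesis to the chain $H_1\trianglelefteq\cdots\trianglelefteq H_r=G$ of length $r-1$, with $L\ind{H_0}{H_1}$ now playing the role of the absolutely indecomposable module on the bottom group, yields that $L\ind{H}{G}$ is absolutely indecomposable. (If 19.22 is available only in the minimal form ``$H$ normal of index $p$'', one uses the refined chain and makes one application at each link; if it is already stated for subnormal $H$ of $p$-power index, the induction collapses to a single step.)

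The genuine content lies in Theorem 19.22 itself, so for orientation I would recall its mechanism in the basic case $H\trianglelefteq G$ with $[G:H]=p$. By Frobenius reciprocity and the Mackey formula, $\mathrm{End}_{kG}(L\ind{H}{G})\cong\bigoplus_{gH\in G/H}\mathrm{Hom}_{kH}(L,{}^{g}L)$. If $L$ is not $G$-stable, its $G$-orbit among indecomposable $kH$-modules has full size $p$, and Clifford theory already forces $L\ind{H}{G}$ to be indecomposable --- stably under every field extension, since the pertinent $\mathrm{Hom}$-dimensions vanish. If $L$ is $G$-stable, then $\mathrm{End}_{kG}(L\ind{H}{G})$ is a crossed product of the local ring $\mathrm{End}_{kH}(L)$ --- whose residue field is $k$ precisely because $L$ is absolutely indecomposable --- with the cyclic $p$-group $G/H$; modulo its nilpotent radical this becomes a twisted group algebra of $G/H$ over $k$, which one checks is again local with residue field $k$. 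This last verification is exactly where $\mathrm{char}\,k=p$ is used and is the only delicate point; for the corollary at hand, however, it is entirely legitimate to invoke 19.22 as a black box, after which nothing beyond the subnormal induction above is required.
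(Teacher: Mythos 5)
The paper offers no proof of this statement at all --- it is quoted verbatim as Corollary 19.23 of Curtis--Reiner, a black-box citation --- so there is nothing internal to compare against. Your derivation is correct and is in fact the standard way the corollary follows from Theorem 19.22: in a finite $p$-group every subgroup is subnormal (the normalizer of a proper subgroup strictly contains it), so one writes $H=H_0\trianglelefteq H_1\trianglelefteq\cdots\trianglelefteq H_r=G$ with each $H_{i+1}/H_i$ a $p$-group, uses transitivity of induction $L\ind{H}{G}\cong(L\ind{H_0}{H_1})\ind{H_1}{G}$, and applies 19.22 once per link; since 19.22 outputs \emph{absolute} indecomposability, the hypothesis is reproduced at each stage and the induction closes. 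This is exactly the argument Curtis--Reiner intend, and it is all that the present paper needs.

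One caution about your ``orientation'' sketch of 19.22 itself: in the non-$G$-stable case you assert that the summands $\mathrm{Hom}_{kH}(L,{}^{g}L)$ for $g\notin H$ vanish because $L\not\cong{}^{g}L$. That is false for general indecomposable (non-simple) modules --- two non-isomorphic indecomposables can admit plenty of nonzero homomorphisms (e.g.\ any map factoring through a common composition factor), so $\mathrm{End}_{kG}(L\ind{H}{G})$ can be strictly larger than $\mathrm{End}_{kH}(L)$ even when the orbit is free. The correct statement is that those off-diagonal pieces lie in the radical of the endomorphism ring, which is what the crossed-product/local-ring analysis actually establishes. Since you explicitly invoke 19.22 as a black box and the sketch carries no logical weight in your proof of 19.23, this does not affect the validity of your argument; it would only matter if you intended the sketch as a self-contained proof of the Green Indecomposability Theorem.
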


\begin{lemma}
\label{del2611}
Let $U$ be an indecomposable $kH$-module of trivial source with vertex containing $O_p(H)$, and let $M$ be an indecomposable $kG$-module for $G\leqslant H$ such that $U$ is a summand of $M\ind{G}{H}$. Suppose also that
if $|H/O_p(H)|$ is divisible by $p$, we have $M$  of trivial source. Then 
\begin{itemize}
\item [i)] $O_p(H)$ is contained in a vertex of $M$ and it acts trivially  on $M$.
\item [ii)] Any indecomposable summand $V$ of $M\ind{G}{H}$ has trivial source, $O_p(H)$ is contained in a vertex of $V$ and it acts trivially  on $V$.
\end{itemize} 
\end{lemma}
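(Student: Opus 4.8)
The plan is to analyze the situation by passing through the subgroup $O_p(H)$ and exploiting both the Green Indecomposability Theorem (via Corollary \ref{green}) and the hypothesis on vertices. First I would set $P = O_p(H)$ and consider the restriction $U\res{P}{H}$. Since $P$ is normal in $H$ and $P$ is contained in a vertex $Q$ of $U$, I want to show that in fact $P$ acts trivially on $U$: here is where one uses that $U$ has trivial source, so its source over $Q$ is the trivial module $k$; restricting the relation ``$U$ is a summand of $k\ind{Q}{H}$'' down to $P$ and using that $P \trianglelefteq H$ with $P \leqslant Q$, every $H$-conjugate of $Q$ still contains $P$, so by Mackey $U\res{P}{H}$ is a sum of modules of the form $k\ind{P\cap {}^hQ}{P} = k\ind{P}{P} = k$, i.e. $P$ acts trivially on $U$. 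This is essentially the standard fact that a trivial source module whose vertex contains a normal $p$-subgroup $P$ is inflated from $H/P$.

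Next I would transfer this information to $M$. We are given that $U$ is a summand of $M\ind{G}{H}$. Restricting back to $G$ and applying the Mackey formula, $U\res{G}{H}$ is a sum of conjugates of $M$ (and possibly other modules), so $M$ is a summand of $U\res{G}{H}$ up to conjugacy; since $P$ acts trivially on $U$, it acts trivially on $U\res{G}{H}$, hence $G \cap P$ — but wait, $P$ need not lie in $G$. The cleaner route is: because $P$ acts trivially on $U$, the module $U$ is really a $k[H/P]$-module, and $M\ind{G}{H}$ having $U$ as a summand forces, via the adjunction/Frobenius reciprocity together with the structure of induction, that the ``$P$-fixed points functor'' does not annihilate the relevant summand. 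I would instead argue directly: a vertex of $M$ is contained in a vertex $Q$ of $U$ (standard: vertices of summands of an induced module are subconjugate to vertices of the inducing module's summands — more precisely, if $U \mid M\ind{G}{H}$ then a vertex of $M$ is $G$-subconjugate to a vertex of $U$, but I need the reverse containment $P \leqslant$ vertex of $M$). For the reverse direction, I use that $M \mid U\res{G}{H}$ (up to $G$-conjugacy, by Mackey applied to $M\ind{G}{H}\res{G}{H}$ having $M$ as a summand with multiplicity at least one in the identity double coset), and since $P$ acts trivially on $U$ it acts trivially on $M$ — except again $P \not\leqslant G$ in general, so ``$P$ acts trivially on $M$'' must be interpreted as: the image of $M$ in $kG$-mod is a summand of a module on which... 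Actually the correct statement, and what part i) asks, is that $O_p(H)$ — not $O_p(G)$ — is contained in a vertex of $M$ and acts trivially on $M$, where ``acts trivially'' means trivially as a subgroup acting via the $kH$-structure on $M\ind{G}{H}$, or after inflation. I would reconcile this by replacing $M$ with its image under the appropriate inflation and noting $M\ind{G}{H} = M\ind{G}{GP}\ind{GP}{H}$ is not quite it either; the honest approach is to reduce to the case $H = GP$ (replacing $H$ by $GP$ and $U$ by a summand of $M\ind{G}{GP}$, using transitivity of induction and that $O_p(GP) \geqslant P$... one must check $O_p(GP)$ still contains $P$, which holds since $P \trianglelefteq H$ implies $P \trianglelefteq GP$), and then $[H:G]$ is a power of $p$, so Corollary \ref{green} applies: $M\ind{G}{H} = M\ind{G}{GP}$ is absolutely indecomposable when $M$ is — forcing $U = M\ind{G}{H}$ and then a vertex of $M$ together with $P$ generates a vertex of $U$; combined with $P$ acting trivially on $U$, one extracts that $P$ centralizes $M$ and lies in a vertex of $M$ after the reduction.

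For part ii), once i) is established I would run the argument symmetrically: any indecomposable summand $V$ of $M\ind{G}{H}$ — in particular $U$ itself, but now any $V$ — has a vertex containing $P$ (since $P$ acts trivially on $M\ind{G}{H}$, as $P$ acts trivially on $M$ and $P \trianglelefteq H$, so $P$ acts trivially on each summand $V$; and trivial $P$-action means $P$ lies in every vertex of $V$ by the standard fact that $O_p$ of the group acting, restricted appropriately, is in the vertex — more carefully, $V$ is a $k[H/P]$-module, but $V \mid M\ind{G}{H}$ with $M$ of trivial source when $p \mid |H/P|$, or in general $M$ has vertex contained in a $p$-group so $V$ does too, and being a summand of something induced from $G$ with trivial-source-type behavior gives $V$ trivial source). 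The trivial source claim for $V$: since $M$ has trivial source (given, when $p\mid|H/O_p(H)|$; and when $p \nmid |H/O_p(H)|$ we get it from $P = O_p(H)$ being a Sylow-type situation plus Corollary \ref{green} forcing $M$ itself to behave well), and trivial source is preserved under induction, every summand of $M\ind{G}{H}$ has trivial source.

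**Main obstacle.** The delicate point is the dichotomy on whether $p$ divides $|H/O_p(H)|$, which is exactly why the hypothesis ``if $|H/O_p(H)|$ is divisible by $p$, $M$ has trivial source'' is imposed: when $p \nmid |H/O_p(H)|$, the index $[H:G]$ considerations and Corollary \ref{green} let one deduce trivial source of $M$ \emph{for free} (the induction $M\ind{?}{?}$ up a $p$-group is controlled), but when $p \mid |H/O_p(H)|$ no such rigidity is available and one genuinely needs to assume $M$ has trivial source to conclude the summands do. Handling this case split cleanly — in particular making precise the claim ``$O_p(H)$ acts trivially on $M$'' when $O_p(H) \not\leqslant G$ (via the reduction to $H = G\cdot O_p(H)$ and Green's theorem) — is where the real work lies; the rest is bookkeeping with the Mackey formula and standard vertex-source facts.
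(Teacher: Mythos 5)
Your proposal never actually closes the one point you yourself flag as the crux, and the tool you reach for to close it does not apply. The worry ``$P=O_p(H)$ need not lie in $G$'' has a one-line resolution that you miss because you state the vertex subconjugacy backwards. If $D_1$ and $S$ are a vertex and a source of $M$, then $M\mid S\ind{D_1}{G}$, hence $U\mid M\ind{G}{H}\mid S\ind{D_1}{H}$, so a vertex of $U$ is $H$-subconjugate to $D_1$ (not the other way around, as you write); since some vertex of $U$ contains the \emph{normal} subgroup $O_p(H)$, conjugating gives $O_p(H)\subseteq D_1\subseteq G$ outright. This single observation dissolves the whole ``reduce to $H=GP$'' detour. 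That detour is in any case unsound: Corollary \ref{green} requires the ambient group to be a $p$-group, and $GP$ is not one in general (nor is $G$ subnormal in $GP$, so no version of Green indecomposability applies to $M\ind{G}{GP}$); and your alternative route via ``$M\mid U\res{G}{H}$'' is not justified — $U\mid M\ind{G}{H}$ does not give that, and the multiplicity-one-in-the-identity-double-coset argument only shows $M\mid M\ind{G}{H}\res{G}{H}$, which says nothing about the summand $U$.

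A second genuine gap: in the case $p\nmid |H/O_p(H)|$ you assert that $M$ has trivial source ``for free'' from Green's theorem, but give no argument. The actual derivation is: in that case $O_p(H)$ is the (normal) Sylow $p$-subgroup of $H$, so $D_1\subseteq O_p(H)$, whence $D_1=O_p(H)$ by the containment above; then $O_p(H)$ is simultaneously a vertex of $U$ and of $M$, and $U\mid S\ind{O_p(H)}{H}$ forces $S$ to be a source of $U$, i.e.\ $S\cong k$. Once $M$ is known to have trivial source with $O_p(H)\subseteq D_1$, parts i) and ii) follow by restricting $k\ind{D_1}{G}$ (resp.\ $k\ind{A}{H}$ for a vertex $A$ of $V$) to $O_p(H)$ via Mackey and applying Corollary \ref{green} to $k\ind{O_p(H)\cap A}{O_p(H)}$ — this last step is essentially the one piece of your sketch that matches the paper, though your claimed ``standard fact'' that trivial $O_p(H)$-action alone puts $O_p(H)$ in a vertex is doing work that still needs the trivial-source input you haven't secured.
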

\begin{proof}
Let $D$ be a vertex of $U$ that contains $O_p(H)$. If $D_1$ and $S$ are a vertex and a source of $M$, respectively, then $O_p(H)\subseteq D_1\subseteq G$ (because $U$ is a direct summand of $S\ind{D_1}{H}$).
If $|H/O_p(H)|$ is a $p'$-number then $O_p(H)$ is a vertex of $U$ and $D_1\subseteq O_p(H)$, so we have $O_p(H)=D_1$. This implies that $S$ is a source of $U$ and that $M$ has trivial source. With this we obtain that $V$ is of trivial source, which is the first part of $ii)$.
  
Since $M$ is a summand of $k\!\uparrow_{D_1}^G$, then  $M\!\!\downarrow_{O_p(H)}^G$ is a direct summand of $k\!\uparrow_{D_1}^G\downarrow_{O_p(H)}^G$. By the Mackey formula, the latter is isomorphic to $\bigoplus_a k$, where $a$ runs over $[G/D_1]$ so $M\!\!\downarrow_{O_p(H)}^G$ is isomorphic to a sum of $k$.
With this we prove $i)$, and by the same argument, we prove that $O_p(H)$ acts trivially on $V$.

Finally, if $A$ is a vertex of $V$ then $V\!\!\!\res{O_p(H)}{H}\cong \bigoplus k$ is a summand of $k\ind{A}{H}\res{O_p(H)}{H}$, which is isomorphic to $\bigoplus_b k\!\ind{O_p(H)\cap A}{O_p(H)}$. By Corollary \ref{green}, we have that  $k\!\ind{O_p(H)\cap A}{O_p(H)}$ is indecomposable, then for some $b$ we should have $k\cong k\ind{O_p(H)\cap A}{O_p(H)}$, so $O_p(H)$ is contained in $A$.
\end{proof}

\section{Primordial groups}

\begin{lemma}
\label{prims} 
$ $ 
\begin{itemize}
\item[i)] Prim$(\gr(k\,\_\,))$ and Prim$(\gr(k\,\_\,, \triv))$ are closed under subgroups and quotients.
\item[ii)]$\mathcal{E}_k\subseteq\textrm{Prim}(\gr(k\,\_\,))\subseteq \textrm{Prim}(\gr(k\,\_\,, \triv))\subseteq \bigcup_qDr_{q}$.
\end{itemize}
\end{lemma}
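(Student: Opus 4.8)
\medskip\noindent\emph{Approach.} Write $M$ for either $\gr(k\,\_\,)$ or $\gr(k\,\_\,,\triv)$. Both are Green functors with identity $[k]$, so Frobenius reciprocity makes each $M(G)\ind{G}{H}$ an ideal of $M(H)$; hence $T(H)$ is an ideal, and $H\in\pp(M)$ exactly when $[k_H]\notin T(H)$, i.e.\ $T(H)\subsetneq M(H)$. Iterating the definition of $T$ downwards gives $M(H)=\sum_{G\leqslant H,\, G\in\pp(M)}M(G)\ind{G}{H}$ for all $H$, and a minimal‑order counterexample argument shows $\pp(M)$ is the smallest class of groups with this sufficiency property. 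I would get i) from the closure statements attached to the Dress--Th\'evenaz theory quoted in the introduction, and for ii) I would establish the three inclusions in turn.

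\medskip\noindent\emph{Part i).} For quotients: if $N\trianglelefteq H$ and $H/N\notin\pp(M)$, write $[k_{H/N}]=\sum_i[\ov{V}_i]\ind{\ov{L}_i}{H/N}$ with each $\ov{L}_i<H/N$, and each $\ov{V}_i$ of trivial source when $M=\gr(k\,\_\,,\triv)$. Inflating along $H\to H/N$ and using $\mathrm{Inf}([k_{H/N}])=[k_H]$, the commutation $\mathrm{Inf}_{H/N}^{H}\circ\ind{\ov{L}}{H/N}=\ind{L}{H}\circ\mathrm{Inf}_{\ov{L}}^{L}$ (with $L<H$ the preimage of $\ov{L}$), and the fact that inflation carries permutation modules to permutation modules — hence trivial source modules to trivial source modules — one obtains $[k_H]\in T(H)$, a contradiction. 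For subgroups: I would try the companion argument with restriction, turning an expression of $[k_K]$ as a sum of classes induced from proper subgroups of $K$ into one for $[k_H]$; should this not close directly, the statement still follows from the defect‑group description of primordials for Green functors. I expect the subgroup case to be the subtlest point of i).

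\medskip\noindent\emph{Part ii), first two inclusions.} The assignment $[M]\mapsto[M]$ is a surjective morphism of globally defined Mackey functors $\gr(k\,\_\,)\twoheadrightarrow G_0(k\,\_\,)$, and a surjective morphism carries $T(H)$ onto $T(H)$, so $\pp$ can only shrink: $\pp(G_0(k\,\_\,))\subseteq\pp(\gr(k\,\_\,))$. Since Raggi's theorem gives $\pp(G_0(k\,\_\,))=\mathcal{E}_k$, the first inclusion follows. For the middle one, $\gr(k\,\_\,,\triv)$ is a subfunctor of $\gr(k\,\_\,)$ containing the identity $[k]$; if $H\notin\pp(\gr(k\,\_\,,\triv))$ then $[k_H]\in T_{\gr(k\,\_\,,\triv)}(H)\subseteq T_{\gr(k\,\_\,)}(H)$, and the latter, being an ideal of $\gr(kH)$, must then equal $\gr(kH)$, so $H\notin\pp(\gr(k\,\_\,))$. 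This is the general behaviour of primordials of subfunctors alluded to in the introduction.

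\medskip\noindent\emph{Part ii), the inclusion $\pp(\gr(k\,\_\,,\triv))\subseteq\bigcup_q Dr_q$ (the heart of the proof).} Let $H\in\pp(\gr(k\,\_\,,\triv))$. As $T(H)$ is a proper ideal and $\gr(kH,\triv)$ is $\zze$‑spanned by indecomposable trivial source modules, some indecomposable trivial source $U$ has $[U]\notin T(H)$; let $D$ be a vertex of $U$. If $N_H(D)<H$, the Green correspondent of $U$ is a trivial source $kN_H(D)$‑module with $U$ a summand of its induction, forcing $[U]\in\gr(kN_H(D),\triv)\ind{N_H(D)}{H}\subseteq T(H)$ — impossible; hence $D\trianglelefteq H$ and, being a $p$‑group, $D\leqslant O_p(H)$. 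Normality of $D$ and the Mackey formula show $U\!\res{D}{H}$ is a sum of copies of $k$, so $D$ acts trivially on $U$ and $U=\mathrm{Inf}_{H/D}^{H}(\ov{U})$ with $\ov{U}$ a projective indecomposable $k[H/D]$‑module. Repeating the inflation argument of part i) — now using that inflating a projective $k\ov{L}$‑module along $L\to L/D$ gives a summand of the permutation module $k[L/D]$, hence a trivial source module — shows $[\ov{U}]\notin\sum_{\ov{L}<H/D}\gr(k\ov{L},\mathrm{proj})\ind{\ov{L}}{H/D}$; that is, $H/D$ is primordial for the functor $G\mapsto\gr(kG,\mathrm{proj})$ of projective classes. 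Finally I would show such a group is $q$‑hyperelementary for some $q$: the Cartan map is an isomorphism $\zze[1/p]\otimes\gr(k\,\_\,,\mathrm{proj})\xrightarrow{\ \sim\ }\zze[1/p]\otimes G_0(k\,\_\,)$, so away from $p$ the primordials are the $k$‑elementary groups by Raggi's theorem, and one must still check that no further primordials arise from $p$‑torsion. Since $D\leqslant O_p(H)$, the group $H/O_p(H)$ is a quotient of $H/D$, hence $q$‑hyperelementary too, so $O^q(H)$ is $p$‑hypoelementary and $H\in Dr_q$. The main obstacle is precisely this last step — controlling, over $\zze$, the primordials of the projective‑module subfunctor, and the $p$‑torsion part in particular; this is where the trivial source module theory of Section 2 (Corollary \ref{green}, Lemmas \ref{tensorind} and \ref{del2611}) does its work, and it is the germ of the finer analysis behind Theorem \ref{thethm}. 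For the present weaker bound $\subseteq\bigcup_q Dr_q$ it suffices that the Cartan comparison and Raggi's theorem capture enough of the structure.
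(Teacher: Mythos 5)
Your part (i) quotient argument and the first two inclusions of (ii) are sound and essentially the paper's (the paper phrases the comparison via unit‑preserving algebra morphisms commuting with induction rather than surjectivity, but the content is the same). The two places where your proposal genuinely breaks down are the subgroup half of (i) and the last inclusion of (ii).

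For closure under subgroups, restriction cannot work: applying Mackey to $[k_H]=\sum_i[V_i\ind{L_i}{H}]$ with $L_i<H$ produces terms induced from $K\cap{}^gL_i$, and this intersection can equal $K$ itself, so you do not get an expression of $[k_K]$ in terms of \emph{proper} subgroups of $K$. Your fallback to the Dress--Th\'evenaz defect theory is also not automatic, since Th\'evenaz's theorem describes the subgroup‑closure of the primordial set, not the primordial set itself. The paper's fix is tensor induction: $[k_K]\mapsto[k_H]$ under $\ind{K}{H}^{\otimes}$, and Benson 3.15.2 converts a sum of properly induced modules into another such sum; this is exactly why Lemma \ref{tensorind} (tensor induction preserves trivial source) appears in Section 2.

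For $\pp(\gr(k\,\_\,,\triv))\subseteq\bigcup_qDr_q$, your route has two gaps. First, from $U\mid f(U)\ind{N_H(D)}{H}$ you conclude $[U]\in\gr(kN_H(D),\triv)\ind{N_H(D)}{H}$; but being a direct summand of an induced module does not place the class in the \emph{image of the induction map}: you only get $[U]=[f(U)]\ind{N_H(D)}{H}-[X]$, and the complement $X$ (whose summands may, e.g., be projective, hence have normal vertex) need not lie in $T(H)$, so the intended induction on vertices does not close. Second, and decisively, the step you yourself flag --- showing that a group primordial for the integral projective subfunctor $\gr(k\,\_\,,\mathrm{proj})$ is $q$-hyperelementary, i.e.\ controlling the $p$-torsion not seen by the Cartan map over $\zze[1/p]$ --- is left unproven, and it is precisely the content of the inclusion. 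The paper avoids all of this by a different mechanism: it invokes Yoshida's form of the Dress induction theorem to write $M(G)$ as $\sum_{K\in\mathcal H(p\textrm{-Hypo})}M(K)\ind{K}{G}$ plus an intersection of restriction kernels, kills that kernel term using Th\'evenaz's $p$-hypoelementary induction for $\mathbb Q\otimes\gr(k\,\_\,)$ together with Frobenius reciprocity ($m=1\cdot m=\sum(n_K\cdot(m\res{K}{G}))\ind{K}{G}=0$), identifies $\mathcal H(p\textrm{-Hypo})=\bigcup_qDr_q$, and concludes by the minimality characterization of the primordial class. You would need to either import that induction‑theoretic input or genuinely settle the integral projective/$p$-torsion question to complete your version.
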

\begin{proof}
 $i)$ The proof is the same for both functors, so $M$ represents any of them. 
First, let $G$ be a primordial group for $M$, and $H$ a subgroup of $G$. By \ref{tensorind} tensor induction provides a map from $M(H)$ to $M(G)$, and clearly it sends the class of the field $k$ to itself. Suppose that $k$ can be written as a linear combination of modules induced from proper subgroups of $H$, then, by $iii)$ and $iv)$ of Proposition 3.15.2  in Benson \cite{benson}, its image is a linear combination of modules induced from proper subgroups of $G$. This contradicts  that $G$ is primordial for $M$.

Now we take $G/K$, a quotient of $G$. Consider the inflation from $M(G/K)$ to $M(G)$. Again, the class of  $k$ is invariant under inflation, so if it could be written as a linear combination of modules induced from proper subgroups of $G/K$, then, these could be seen as modules induced form proper subgroups of $G$, which is a contradiction.

$ii)$ In order to prove the inclusions
\begin{displaymath}
 \mathcal{E}_k\subseteq\textrm{Prim}(\gr(k\,\_\,))\subseteq \textrm{Prim}(\gr(k\,\_\,, \triv))
\end{displaymath}
 we recall that for every group $H$, we have the following morphisms
\begin{displaymath}
 \gr(kH)\rightarrow G_0(kH)\quad \textrm{and}\quad \gr(kH,\,\triv)\hookrightarrow \gr(kH),
\end{displaymath}
the first one sends the class $[T]$ in $\gr(kH)$ to the class of $T$ in $G_0(kH)$. These are morphisms of unitary algebras and commute with induction. To represent any of them we write $f_H:M(H)\rightarrow N(H)$.
Now suppose that $H$ is primordial for $N$. Given the properties of $f_H$, if $k$ can be written as a linear combination of modules induced from proper subgroups of $H$ in $M(H)$, then that can also be made in $N(H)$, which is a contradiction. So $H$ is primordial for $M$. Recall that Prim$(G_0(k\,\_\,))=\mathcal{E}_k$, as mentioned in the Introduction.

To prove the inclusion $\textrm{Prim}(\gr(k\,\_\,, \triv))\subseteq \bigcup_qDr_{q}$, we will write $M$ for $\gr(k\,\_\,,\, \triv)$ and $\mathcal{D}$  for Prim$(\gr(k\,\_\,, \triv))$. Let $G$ be any group. We will use the following facts: 
\begin{itemize}
\item[a)] Using the Dress induction theorem for the Burnside ring, as stated in Yoshida's paper \cite{yoshida}, we obtain a generalization of this theorem for the Mackey functor $M$. Let $p$-Hypo be the class of $p$-hypoelementary groups, we have:
\begin{displaymath}
 M(G)=\sum_{\substack{K\leq G\\ K\in \mathcal{H}(p\textrm{-Hypo})}}\!\! tr_K^GM(K) + \bigcap_{\substack{L\leq G\\ L \in p\textrm{-Hypo}}}\textrm{ker}(res_L^G)
\end{displaymath}
where  $res_L^G$ is the restriction map from $M(G)$ to $M(L)$ and
\begin{displaymath}
\mathcal{H}(p\textrm{-Hypo})=\{H\ \textrm{group} \mid \exists q\ \textrm{prime with}\ O^q(H)\in p\textrm{-Hypo} \}. 
\end{displaymath}
It is not hard to prove that $\mathcal{H}(p\textrm{-Hypo})=\bigcup_q Dr_q$.
\item[b)] Observe that the Mackey functor $\gr(k\,\_\,)$ satisfies the Frobenius reciprocity formulas
\begin{displaymath}
tr_K^G(m\cdot res_K^Gn)=(tr_K^Gm)\cdot n\ \ \textrm{and}\ \ tr_K^G((res_K^Gn)\cdot m)=n\cdot (tr_K^Gm)
\end{displaymath}
for all $m$ in $\gr(kK)$ and $n$ in $\gr(kG)$, with $K\leq G$. 
\item[c)] Note also that $\mathcal{D}$  is the smallest class of groups closed under subgroups and quotients such that for every group $G$,
\begin{displaymath}
M(G)=\sum_{\substack{K\leq G\\ K\in \mathcal{D}}} tr_K^GM(K). 
\end{displaymath}
A proof of this is a slight modification of Th\'evenaz's in  \cite{thev}. 
\end{itemize}

We consider the inclusion $\gr(kG,\,\triv)\subseteq \mathbb Q\otimes \gr(kG)$ 
and we will write $N$ for $\mathbb Q\otimes \gr(k\,\_\,)$. From the article of Th\'evenaz \cite{thev}, we have that \mbox{Prim$(N)$} is the class $p$-Hypo and that
\begin{displaymath}
N(G)=\sum_{\substack{K\leq G\\ K\in p\textrm{-Hypo}}}tr_K^GN(K). 
\end{displaymath}
So we have 
\begin{displaymath}
1_{M(G)}=1_{N(G)}=\sum_{\substack{K\leq G\\ K\in p\textrm{-Hypo}}}tr_K^Gn_K 
\end{displaymath}
where $1_{M(G)}$ represents the unity of $M(G)$ and $n_K$ is an element of $N(K)$.
From the formula in $a)$, if $m$ is any element $m$ of $\bigcap_{\substack{L\leq G\\ L \in p\textrm{-Hypo}}}\textrm{ker}(res_L^G)$, we have
\begin{eqnarray*}
 m=1_{M(G)}\cdot m & = & \sum_{\substack{K\leq G\\ K\in p\textrm{-Hypo}}}(tr_K^Gn_K)\cdot m\\
                   & = & \sum_{\substack{K\leq G\\ K\in p\textrm{-Hypo}}}tr_K^G(n_K\cdot res_K^Gm)\quad \textrm{by $b)$}\\
                   & = & \sum_{\substack{K\leq G\\ K\in p\textrm{-Hypo}}}tr_K^G(n_K\cdot 0)=0.
\end{eqnarray*}
So we have
\begin{displaymath}
 M(G)=\sum_{\substack{K\leq G\\ K\in \mathcal{H}(p\textrm{-Hypo})}}\!\! tr_K^GM(K). 
\end{displaymath}
Since $\mathcal{H}(p\textrm{-Hypo})$ is closed under subgroups and quotients, using $c)$ we obtain $\textrm{Prim}(\gr(k\,\_\,, \triv))\subseteq \bigcup_qDr_{q}$.
\end{proof}


\begin{thm}
 \label{thethm}
$\pp(\gr(k\,\_\,,\, \triv))=Dr_k$.
\end{thm}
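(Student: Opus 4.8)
The plan is to prove the two inclusions $Dr_k\subseteq\pp(\gr(k\,\_\,,\,\triv))$ and $\pp(\gr(k\,\_\,,\,\triv))\subseteq Dr_k$; together with Lemma~\ref{prims}, which already gives $\mathcal{E}_k\subseteq\pp(\gr(k\,\_\,,\,\triv))\subseteq\bigcup_q Dr_q$, this yields the theorem.

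For the inclusion $Dr_k\subseteq\pp(\gr(k\,\_\,,\,\triv))$ I would use the Brauer quotient at $P=O_p(H)$. For $H\in Dr_k$ the group $\ov H:=H/P$ is $k$-elementary, hence primordial by Lemma~\ref{prims}. For a trivial source $kH$-module $M$ write $\mathrm{Br}_P(M)=M^P/\sum_{Q<P}\mathrm{tr}_Q^P(M^Q)$; since $P\trianglelefteq H$ we have $N_H(P)=H$ and $P$ acts trivially on $\mathrm{Br}_P(M)$, so $\mathrm{Br}_P(M)$ is a trivial source $k\ov H$-module, and additivity makes $\mathrm{Br}_P$ a group homomorphism $\gr(kH,\,\triv)\to\gr(k\ov H,\,\triv)$. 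Two points are needed. First, $\mathrm{Br}_P\circ\mathrm{Inf}_{\ov H}^H\cong\mathrm{id}$: in an inflated module $P$ acts trivially, so all its $P$-fixed points survive and each $\mathrm{tr}_Q^P$ with $Q<P$ is multiplication by $[P:Q]\in p\zze$, hence $0$ in $k$ (in particular inflations of trivial source modules have trivial source). Second, $\mathrm{Br}_P(T(H))\subseteq T(\ov H)$: given $G<H$ and a trivial source $kG$-module $V$, either $P\not\le G$, and then $\mathrm{Br}_P(V\ind{G}{H})=0$ since $P\trianglelefteq H$ forces $P\le{}^gG\Leftrightarrow P\le G$; or $P\le G<H$, and then $P\trianglelefteq G$ and the Brauer construction commutes with induction, so $\mathrm{Br}_P(V\ind{G}{H})\cong\mathrm{Br}_P(V)\ind{G/P}{H/P}$ with $G/P<\ov H$. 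Now take $\ov u\in\gr(k\ov H,\,\triv)\setminus T(\ov H)$ and set $u=\mathrm{Inf}_{\ov H}^H(\ov u)$; then $\mathrm{Br}_P(u)=\ov u\notin T(\ov H)\supseteq\mathrm{Br}_P(T(H))$, so $u\notin T(H)$ and $H$ is primordial.

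For the inclusion $\pp(\gr(k\,\_\,,\,\triv))\subseteq Dr_k$ I would reduce as follows. Let $G$ be primordial; by Lemma~\ref{prims}, $\ov G:=G/O_p(G)$ is primordial, $O_p(\ov G)=1$, and $\ov G\in Dr_q$ for some prime $q$. Since $O^q(G)$ is $p$-hypoelementary iff $G/O_p(G)$ is $q$-hyperelementary, it is enough to show that $\ov G$ is $k$-elementary. Write $\ov G=C\rtimes D$ with $D$ a $q$-group and $C=\langle x\rangle$ of order $m$ prime to $q$. If $q\ne p$ then $p\nmid|\ov G|$, because otherwise the Sylow $p$-subgroup of $C$ would be a nontrivial normal $p$-subgroup of $\ov G$; then all modules over $\ov G$ and over its subgroups are projective of trivial source, so $\gr(k\ov G,\,\triv)=G_0(k\ov G)$ (and likewise for subgroups), the classes $\pp(\gr(k\,\_\,,\,\triv))$ and $\pp(G_0(k\,\_\,))=\mathcal{E}_k$ agree on $p'$-groups, and therefore $\ov G\in\mathcal{E}_k$. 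If $q=p$, then $O_p(\ov G)=1$ forces the action of $D$ on $C$ to be faithful; passing to $C\rtimes\langle y\rangle$ for a $y\in D$ witnessing the failure of $k$-elementariness, and then dividing by its $O_p$ — both operations preserving primordiality and the failure of the condition $yxy^{-1}=x^a$ with $a\in I_m(k)$ — one is reduced to showing: \emph{if $H=C\rtimes D$ with $C$ cyclic of order $m$ prime to $p$, $D$ a nontrivial cyclic $p$-group acting faithfully on $C$, and $H$ not $k$-elementary, then $\gr(kH,\,\triv)=T(H)$.}

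This last statement is the main obstacle. I would attack it through the identification of $k$-elementariness of $H$ with $D$ acting trivially on $\mathrm{Spec}(kC)$: decomposing $kC\cong\prod_\theta k(\omega_{d_\theta})$ over the $\gal(\ov k/k)$-orbits $\theta$ of characters of $C$, the condition $yxy^{-1}=x^a$ with $a\in I_m(k)$ for all $y\in D$ means precisely that the permutation of the factors induced by $D$ is absorbed by the Galois action, i.e.\ is trivial; the block of $kH$ at a $D$-fixed $\theta$ is the skew group algebra $k(\omega_{d_\theta})*D$ with $D$ acting through a homomorphism $D\to\gal(k(\omega_{d_\theta})/k)$, whose defect group is the kernel of that homomorphism. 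So non-$k$-elementariness means $D$ genuinely moves at least one point of $\mathrm{Spec}(kC)$, which modifies the block structure of $kH$ correspondingly. I would then run a downward induction on the order of a vertex $Q$ of an indecomposable trivial source $kH$-module $W$: if $Q\ne1$ then $N_H(Q)<H$ because $O_p(H)=1$, and Green correspondence expresses $[W]$, modulo classes of modules of strictly smaller vertex, as a class induced from $N_H(Q)$, so it lies in $T(H)$ by the inductive hypothesis — Lemma~\ref{del2611} ensuring that only modules of trivial source occur. The base case $Q=1$, i.e.\ that every projective indecomposable $kH$-module belongs to $T(H)$, is the heart of the matter and the place where the arithmetic of $I_m(k)$ is used: one must show that the $k$-rational projective modules, with the integer multiplicities with which they actually occur, are reached by induction from the proper subgroups $C$, $D$ and the $C\rtimes D'$ with $D'<D$, and the comparison of the $D$-orbit and $\gal(\ov k/k)$-orbit structures on the characters of $C$ should show that the non-$k$-elementary hypothesis is exactly what removes the multiplicity obstruction which, in the $k$-elementary case, forces $\gr(kH,\,\triv)/T(H)\ne 0$ and makes $H$ primordial.
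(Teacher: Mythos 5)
Your first inclusion, $Dr_k\subseteq\pp(\gr(k\,\_\,,\,\triv))$, is correct and takes a genuinely different route from the paper: where you use the Brauer construction at $P=O_p(H)$ (with the standard facts that $\mathrm{Br}_P$ preserves trivial source modules, splits inflation, and for $P\trianglelefteq H$ annihilates $V\ind{G}{H}$ unless $P\le G$, in which case it returns $\mathrm{Br}_P(V)\ind{G/P}{H/P}$), the paper argues directly on an explicit isomorphism expressing $k$ as an element of $T(H)$ and uses Lemma \ref{del2611} to see that all modules occurring are trivial source with $O_p(H)$ acting trivially, then passes to $H/O_p(H)$ and invokes Lemma \ref{prims}. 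The two arguments buy the same thing; yours is cleaner but imports Brou\'e's machinery, while Lemma \ref{del2611} does the corresponding work by hand. Your reductions for the reverse inclusion (the case $q\ne p$ via the identification $\gr(k\,\ov G,\triv)=G_0(k\,\ov G)$ on $p'$-groups together with Raggi's theorem, and the Green-correspondence descent on vertices using $N_H(Q)<H$ whenever $1\ne Q$ and $O_p(H)=1$) are also sound.

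The gap is in the base case of that descent, which you yourself flag: you must actually prove that for $H=C\rtimes D$, with $C$ cyclic of $p'$-order, $D$ a nontrivial cyclic $p$-group acting faithfully and $H$ not $k$-elementary, the relevant classes lie in $T(H)$; the proposal only asserts that the comparison of $D$-orbits and Galois orbits on the characters of $C$ ``should show'' this. No such comparison is carried out, and it is precisely the non-formal content of the theorem. The paper does two things here that you are missing. First, Lemma \ref{notk} reduces further to $|C|=r$ prime and $D=\langle y\rangle$ of order $q^n$ with $yxy^{-1}=x^a$, $a\in \zze_r^*\smallsetminus I_r(k)$ --- and this reduction treats all primes $q$ uniformly, so your separate handling of $q\ne p$ is not needed. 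Second, Proposition \ref{uno} computes $k\ind{Q}{H}$ explicitly over $k(\omega)$ in the eigenbasis $y_t=\sum_i\omega^{-ti}(x^i\otimes 1)$, takes $\gal(k(\omega)/k)$-fixed points, and shows that the complement of $1_H$ decomposes as a sum of modules $M_l\ind{A}{H}$ with $A=Stab_H(M_l)$ a \emph{proper} subgroup exactly because $a\notin I_r(k)$ (otherwise the Galois action absorbs the action of $y$ and some $M_l$ is $H$-stable). This yields $1_H\in T(H)$, whence $\gr(kH,\triv)=T(H)$ by Frobenius reciprocity --- a more economical target than your ``all projective indecomposables,'' but one that still requires an argument of exactly this arithmetic kind, which your proposal does not supply.
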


The proof is given by Propositions \ref{dosa} and \ref{uno}. 

With Proposition \ref{uno} we will conclude that every primordial group for $\gr(k\,\_\,)$ has to be $k$-Dress for some prime. On the other hand, the following proposition shows that every $k$-Dress group for a prime different from $p$ is primordial for $\gr(k\,\_\,)$.
As for the general case, the main difficulty arises from the fact that the techniques we use (namely Lemma \ref{del2611}) are ineffective for non trivial source modules.

\begin{pro}
 \label{dosa}
$ $
\begin{itemize}
\item [i)] $Dr_k\smallsetminus Dr_p^*\subseteq \pp(\gr(k\,\_\,))$.
\item [ii)] $Dr_p^*\subseteq \pp(\gr(k\,\_\,,\, \triv))$.
\end{itemize}
\end{pro}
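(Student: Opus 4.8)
The plan is to deduce both statements from a single reduction to the quotient $\ov H:=H/O_p(H)$, which in each case is $k$-elementary. For the group theory: since $H$ is $k$-Dress for some prime means, by definition, exactly that $\ov H$ is a $k$-elementary group, we have $H\in Dr_k\iff\ov H\in\mathcal E_k$. Moreover $H\in Dr_k\smallsetminus Dr_p^*$ forces $p\nmid|\ov H|$: if $p$ divided $|\ov H|$, then, the cyclic part of the $k$-elementary group $\ov H$ having order prime to $p$, the $p$ would have to live in the $q$-group part, so $q=p$ with nontrivial $p$-part, i.e.\ $H\in Dr_p^*$. Hence in case~$i)$ the subgroup $P:=O_p(H)$ is a Sylow $p$-subgroup of $H$ and $p\nmid|\ov H|$, while in case~$ii)$ $\ov H=C\rtimes D$ with $D$ a nontrivial $p$-group. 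In all cases $\ov H\in\mathcal E_k\subseteq\pp(\gr(k\,\_\,,\triv))$ by Lemma~\ref{prims}.

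Next I would introduce the $\mathbb Z$-linear map $\pi$ on $\gr(kH)$ (resp.\ on $\gr(kH,\triv)$) that fixes every indecomposable trivial source module on which $P$ acts trivially and annihilates every other indecomposable. The basic observation, valid for any group $G$ with $P\trianglelefteq G$ (inflation does not decrease vertices, and a normal $p$-subgroup contained in $Q$ acts trivially on $k[G/Q]$), is that an indecomposable trivial source $kG$-module has $P$ acting trivially if and only if $P$ lies in one of its vertices, and that such modules are precisely the inflations $\mathrm{Infl}_{G/P}^{G}(V')$ of indecomposable trivial source $k[G/P]$-modules $V'$. Thus $\pi$ is a retraction onto $\mathrm{Infl}_{\ov H}^{H}\!\bigl(\gr(k\ov H,\triv)\bigr)$, and $\pi([k])=[k]=\mathrm{Infl}_{\ov H}^{H}([k])$.

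The core step is the inclusion $\pi\bigl(T(H)\bigr)\subseteq\mathrm{Infl}_{\ov H}^{H}\!\bigl(T(\ov H)\bigr)$, the right-hand $T$ formed in $\gr(k\ov H,\triv)$. Since $T(H)$ is spanned by the classes $[M\ind{K}{H}]$ with $K<H$ and $M$ an indecomposable $kK$-module (resp.\ an indecomposable trivial source $kK$-module), fix such a pair and consider $M\ind{K}{H}$. If none of its indecomposable summands is of trivial source with $P$ in a vertex, then $\pi([M\ind{K}{H}])=0$. Otherwise pick such a summand $U$ and apply Lemma~\ref{del2611}: its hypothesis on $M$ holds automatically — in case~$ii)$ because $M$ is a summand of a trivial source module, and in case~$i)$ because $p\nmid|\ov H|$ makes the extra hypothesis vacuous and (since $P$ is then a Sylow $p$-subgroup) the argument in the proof of the lemma even yields that $M$ is of trivial source. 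The lemma gives $P\leqslant K$ with $P$ in a vertex of $M$, $P$ acting trivially on $M$, and every summand of $M\ind{K}{H}$ of trivial source with $P$ in a vertex; hence $\pi([M\ind{K}{H}])=[M\ind{K}{H}]$. As $P\trianglelefteq K$ acts trivially on $M$, the observation above lets us write $M=\mathrm{Infl}_{K/P}^{K}(M')$ with $M'$ of trivial source, and since inflation commutes with induction along $P\trianglelefteq H$ with $P\leqslant K$,
\[
M\ind{K}{H}\;\cong\;\mathrm{Infl}_{\ov H}^{H}\!\bigl(M'\ind{K/P}{\ov H}\bigr),\qquad K/P<\ov H,
\]
which lies in $\mathrm{Infl}_{\ov H}^{H}\!\bigl(\gr(k[K/P],\triv)\ind{K/P}{\ov H}\bigr)\subseteq\mathrm{Infl}_{\ov H}^{H}\!\bigl(T(\ov H)\bigr)$. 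This proves the inclusion. Consequently, if $[k]\in T(H)$, applying $\pi$ and using injectivity of inflation gives $[k]\in T(\ov H)$ in $\gr(k\ov H,\triv)$, contradicting $\ov H\in\pp(\gr(k\,\_\,,\triv))$; hence $H$ is primordial for $\gr(k\,\_\,)$ in case~$i)$ and for $\gr(k\,\_\,,\triv)$ in case~$ii)$.

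I expect the main obstacle to be exactly the application of Lemma~\ref{del2611} inside the core step: when $p\mid|\ov H|$ that lemma genuinely needs $M$ to be of trivial source, so the reduction only works for the trivial source functor. This is why case~$ii)$ — and the theorem as a whole — is stated for $\gr(k\,\_\,,\triv)$ and not for $\gr(k\,\_\,)$, and why only a result close to the Brauer--Berman--Witt analogue is obtained.
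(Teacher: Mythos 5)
Your proof is correct and follows essentially the same route as the paper's: the same case split on whether $p$ divides $|H/O_p(H)|$, Lemma \ref{del2611} as the key tool forcing trivial source and trivial $O_p(H)$-action on all relevant summands, deflation to the $k$-elementary quotient $H/O_p(H)$, and Lemma \ref{prims} for the final contradiction. Your retraction $\pi$ is simply a tidier packaging of the paper's Krull--Schmidt step, in which one assumes all the $M_i$ and $N_j$ are of trivial source before passing to the quotient.
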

\begin{proof}
We prove $i)$ and $ii)$ simultaneously by contradiction. Let $H$ be a $k$-Dress group. 
We have two cases:
\begin{itemize}
 \item $H/O_p(H)$ is not divisible by $p$; in this case we suppose $H$ is not primordial for $\gr(k\,\_\,)$ to prove $i)$.
\item $H/O_p(H)$ is divisible by $p$, so we assume $H$ is not primordial for $\gr(k\,\_\,,\, \triv)$ to prove $ii)$.
\end{itemize}
In both cases, $k$ can be written as a linear combination of modules induced from proper subgroups of $H$ 
 \begin{displaymath}
 k\oplus \Big(\bigoplus_i M_i\ind{L_i}{H}\Big )\cong \bigoplus_j N_j\ind{T_j}{H}.
\end{displaymath}
We show that in the first case we can assume that $M_i$ and $N_j$ are trivial source modules. Notice that if an indecomposable module of trivial source is a direct summand of one $M_i\ind{L_i}{H}$ (or $N_j\!\ind{T_j}{H}$), then by Lemma \ref{del2611}, $M_i$ (or $N_j$) and 
all the indecomposable summands are trivial source modules. Therefore, by the Krull-Schmidt Theorem, we can assume all of the $M_i$ and $N_j$ are of trivial source. Since in the second case
 we already assume that they are trivial source modules, the following arguments are valid for both cases. From \ref{del2611}, we have that $O_p(H)$ acts trivially on $N_j$ and $M_i$ and
that they have a vertex containing $O_p(H)$. We take the quotients
\begin{displaymath}
 k\oplus \Big( \bigoplus_i M_i\ind{L_i/O_p(H)}{H/O_p(H)}\Big )\cong \bigoplus_j N_j\ind{T_j/O_p(H)}{H/O_p(H)}.
\end{displaymath}
This isomorphism is an equality in $\gr(k(H/O_p(H)),\, \triv)$,  which is contained in $\gr(k(H/O_p(H)))$. 
Since $H/O_p(H)$ is $k$-elementary, Lemma \ref{prims} yields a contradiction.
\end{proof}

\begin{lemma}
 \label{notk}
If $H$ is of smallest order  
being $q$-Dress and not $k$-Dress, then $H$ is of the form
\begin{displaymath}
 H=<x>\rtimes <y>\ \textrm{where}\ \ |<x>|=r,\quad |<y>|=q^n
\end{displaymath}
 with $r$ and $q$ different primes and $yxy^{-1}=x^a$ with $a\in \zze_r^*\smallsetminus I_r(k)$.
\end{lemma}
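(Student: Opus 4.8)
The plan is to run a minimal‑counterexample argument, using throughout that $Dr_q$ and $Dr_k$ are closed under subgroups and quotients, so that every proper subquotient of a minimal $H\in Dr_q\smallsetminus Dr_k$ is $k$-Dress.

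First I would show $O_p(H)=1$. If not, $H/O_p(H)$ is a proper quotient which is still $q$-Dress; it is $k$-Dress exactly when it is $k$-elementary, which by the remark following Definition \ref{defdress} happens exactly when $H$ is $k$-Dress, so $H\notin Dr_k$ forces $H/O_p(H)\in Dr_q\smallsetminus Dr_k$, contradicting minimality. Hence $O_p(H)=1$ and, again by that remark, $H$ is $q$-hyperelementary: $H=C\rtimes Q$ with $C=\langle x\rangle$ cyclic of order $m$ prime to $pq$ (its Sylow $p$-subgroup is characteristic in $C$, hence normal in $H$, hence trivial), $Q$ a $q$-group, and $H$ itself not $k$-elementary. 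Next I would make $Q$ cyclic: choose $y\in Q$ with $yxy^{-1}=x^{a}$ and $a\notin I_m(k)$; then $C\rtimes\langle y\rangle$ is $q$-hyperelementary with trivial $O_p$, hence $q$-Dress, and not $k$-elementary, hence not $k$-Dress, so minimality gives $Q=\langle y\rangle$. Write $|y|=q^{n}$ and let $\bar a\in(\zze/m\zze)^{*}\cong\mathrm{Aut}(C)$ be the image of $y$; it has $q$-power order. Finally I would arrange $q^{n}=\mathrm{ord}(\bar a)$: if $q^{n}$ were larger, $\langle y^{\mathrm{ord}(\bar a)}\rangle$ would centralise $C$ and thus be central in $H$, and $H/\langle y^{\mathrm{ord}(\bar a)}\rangle\cong C\rtimes C_{\mathrm{ord}(\bar a)}$ would be a strictly smaller group carrying the same, still non-$k$-elementary, action.

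The decisive step is to prove that $m$ is prime. Here I would exploit a phenomenon special to characteristic $p>0$: the root of unity $\omega$ is algebraic over the prime field, so $k(\omega)=k\cdot\mathbb{F}_{p^{f}}$ for some $f$, and therefore the cyclotomic image $G:=\gal(k(\omega)/k)\hookrightarrow(\zze/m\zze)^{*}$ is \emph{cyclic}, being a subgroup of $\gal(\mathbb{F}_{p^{f}}/\mathbb{F}_{p})$. Writing $m=\prod_i r_i^{e_i}$, each proper divisor $m'$ of $m$ gives a proper subgroup $\langle x^{m/m'}\rangle\rtimes\langle y\rangle$ of $H$, which by minimality is $k$-Dress, hence $k$-elementary, so $\bar a$ reduces into the cyclotomic image modulo $m'$; likewise $\bar a^{q}\in G$, from $\langle x\rangle\rtimes\langle y^{q}\rangle$. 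Because $G$ is cyclic and the restriction maps between cyclotomic images are surjective, an elementary congruence computation on exponents, using that $\bar a$ has $q$-power order, forces $\bar a\in G$ whenever $m$ has three or more prime divisors or a squared prime divisor, contradicting that $H$ is not $k$-elementary. What survives is $m=r_1r_2$ with $r_1\ne r_2$ prime, in which case $\bar a$ reduces into the cyclotomic image modulo each $r_i$ (otherwise $\langle x^{m/r_i}\rangle\rtimes\langle y\rangle$ would already be a strictly smaller non-$k$-elementary $q$-hyperelementary group).

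I expect this two‑prime case to be the main obstacle. Here $H$ can fail to be $k$-elementary while \emph{all} of its proper subquotients are $k$-Dress — the point being that $k(\omega_{r_1})$ and $k(\omega_{r_2})$ need not be linearly disjoint over $k$ — so one cannot finish by subquotients alone; instead one must exhibit a strictly smaller $q$-hyperelementary non-$k$-elementary group, necessarily built from some prime $r\notin\{r_1,r_2\}$ for which $[k(\omega_r):k]$ carries less $q$-torsion than $(\zze/r\zze)^{*}$ does, and then compare orders. Granting this, $m$ is prime, say $m=r$, and $H=\langle x\rangle\rtimes\langle y\rangle$ with $|x|=r$, $|y|=q^{n}$, and $r\ne q$ because $(m,q)=1$; that $H$ is not $k$-elementary means precisely $a\in\zze_r^{*}\smallsetminus I_r(k)$, which is the assertion.
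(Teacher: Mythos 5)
Your opening reductions ($O_p(H)=1$ via the quotient $H/O_p(H)$, then $H=C\rtimes\langle y\rangle$ with $C$ cyclic of order $m$ prime to $pq$, the action given by some $a\notin I_m(k)$ of $q$-power order) coincide with the paper's, and your observation that $\gal(k(\omega)/k)$ is cyclic in characteristic $p$ is the right lens on the problem. But the proof is not complete: everything after the routine reductions hinges on the step you yourself call decisive and then defer with ``Granting this''. Your diagnosis of where the difficulty sits is accurate — the three-or-more-primes case follows from the standard ``pairwise solvable implies solvable'' criterion for simultaneous congruences, and the prime-power case follows because $a$ has order prime to $r$ — but the two-prime case is a genuine obstruction, not a technicality. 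For instance with $k=\mathbb F_2$, $q=3$, $m=7\cdot 73$: the orders of $2$ modulo $7$ and modulo $73$ are $3$ and $9$, so $\gal(k(\omega)/k)$ has order $9$ while the product of the two local Galois images has order $27$; the element $a\equiv(2\bmod 7,\ 2^6\bmod 73)$ has order $3$, reduces into each local image, yet does not lie in $I_{511}(\mathbb F_2)$. So $C_{511}\rtimes C_3$ is $3$-Dress, not $k$-Dress, and every proper subquotient is $k$-elementary; minimality over subquotients alone cannot finish. To conclude one must exhibit a strictly smaller $q$-Dress non-$k$-Dress group at some \emph{other} prime and compare orders (in the example one must go to $r=31$, where $\mathrm{ord}_{31}(2)=5$ is prime to $3$ although $3\mid 30$, giving $C_{31}\rtimes C_3$ of order $93<1533$). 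You assert that such a prime exists but give neither a construction nor the order comparison; that existence statement is the whole content of the lemma in this case, and without it the lemma is unproved.

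For comparison, the paper takes a different and much shorter route at exactly this point: it asserts that restriction induces an isomorphism $\gal(k(\omega)/k)\cong\prod_i\gal(k(\omega^{m/r_i^{\alpha_i}})/k)$ compatible with $(\zze/m\zze)^*\cong\prod_i(\zze/r_i^{\alpha_i}\zze)^*$, and concludes that some $C_{r_i}\rtimes C_{q^n}$ is already non-$k$-elementary, so that the subquotient argument does close and the comparison with a third prime never arises. Your worry about linear disjointness is aimed at precisely this isomorphism, and the example above shows the restriction map need not be surjective, so you cannot simply import the paper's step to fill your gap. Either supply the missing existence-and-size argument for the auxiliary prime, or find a different way to exclude the case $m=r_1r_2$; as written, the proposal stops short of proving the statement.
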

\begin{proof}
Being a quotient of $H$, $H/O_p(H)$ is $q$-Dress 
but, since it is not $k$-elementary and $O_p(H/O_p(H))=1$, then $H/O_p(H)$ is not $k$-Dress. Therefore the minimality of $H$ implies $O_p(H)=1$. Hence,
 we have $H=C\rtimes Q$ with $C=<s>$ cyclic of order $m$ and $Q$ a $q$-group such that $m$ not divisible by $p$ and $q$. Now, as $H$ is not $k$-elementary, there exists $y\in Q$ such that $ysy^{-1}=s^a$ with $a\in \zze_m^*\smallsetminus I_m(k)$, so $H=C\rtimes C_{q^n}$, where $C_{q^n}$ is the cyclic of order $q^n$ generated by $y$.

Now we write $C=\prod_i C_{r_i}$, where $C_{r_i}$ is the $r_i$-Sylow subgroup of $C$ and the $r_i$ are all the primes that divide $m$, so
\begin{displaymath}
C\rtimes C_{q^n}=\prod_i (C_{r_i}\rtimes C_{q^n}).
\end{displaymath}
In addition, if $\omega$ is a primitive $m$-th root of unity, we have the commutative diagram
\[
  \xymatrix{
  \mathcal Gal(k(\omega)/k) \ar[d]_{\cong} \ar@{^{(}->}[r] & (\zze/m\zze)^*\ar[d]^{\cong} \\
  \prod_i \mathcal Gal(k(\omega^{m/r_i^{\alpha_i}})/k) \ar@{^{(}->}[r]^-{}  & \prod_i(\zze/r_i^{\alpha_i}\zze)^* }
\]
where $\alpha_i$ is the largest positive integer such that $r_i^{\alpha_i}$ divides $m$. Thus, there exists $i$ such that $C_{r_i}\rtimes C_{q^n}$ is not $k$-elementary. Rewriting $C_{r_i}=C_{r^{\alpha}}$ we have $H=C_{r^{\alpha}}\rtimes C_{q^n}$, where $C_{r^{\alpha}}=<x_o>$, $C_{q^n}=<y>$ and $yx_oy^{-1}=x_o^a$ with $a\in \zze_{r^{\alpha}}^*\smallsetminus I_{r^{\alpha}}(k)$.

Finally, we take $\zeta$ a primitive $r^{\alpha}$-th root of unity. We have 
\begin{displaymath}
 \gal(k(\zeta)/k)\cong \gal(k(\zeta)/k(\zeta^{r^{\alpha -1}}))\times \gal(k(\zeta^{r^{\alpha-1}})/k)
\end{displaymath}
and $(\zze/r^{\alpha}\zze)^*\cong A_{r^{\alpha -1}}\times A_{r-1}$ where these groups have order $r^{\alpha -1}$ and $r-1$ respectively. The  morphism 
\[
\xymatrix{
 \gal(k(\zeta)/k)\ar@{^{(}->}[r] & (\zze/r^{\alpha}\zze)^*}
\]
takes $\gal(k(\zeta)/k(\zeta^{r^{\alpha -1}}))$ into $A_{r^{\alpha -1}}$ and $\gal(k(\zeta^{r^{\alpha-1}})/k)$ into $A_{r-1}$, which is isomorphic to $(\zze/r\zze)^*$, so we have the commutative diagram
\[
\xymatrix{
  \mathcal Gal(k(\zeta)/k) \ar@{->>}[d] \ar@{^{(}->}[r] & (\zze/r^{\alpha}\zze)^*\ar@{->>}[d] \\
  \mathcal Gal(k(\zeta^{r^{\alpha-1}})/k) \ar@{^{(}->}[r]  & (\zze/r\zze)^*. }
\]
Now, we have $a^{q^n}\equiv 1$ mod $r^{\alpha}$, thus $r$ does not divide the order of $a$ modulo $r^{\alpha}$, and we have $a\in \zze_r^*$. Since a is not in $I_{r^{\alpha}}(k)$, we have $a\in\zze_r^*\smallsetminus I_r(k)$. Taking $x=x_o^{r^{\alpha -1}}$  
we have the result.
\end{proof}

\begin{pro}
 \label{uno}
$\pp(\gr(k\,\_\, ,\, \textnormal{triv}))\subseteq Dr_k$.
\end{pro}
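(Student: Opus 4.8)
The plan is to argue by contradiction. Suppose $\pp(\gr(k\,\_\,,\triv))\not\subseteq Dr_k$ and let $H$ have minimal order among the groups in $\pp(\gr(k\,\_\,,\triv))\smallsetminus Dr_k$. By Lemma \ref{prims} the group $H$ is $q$-Dress for some prime $q$, and by Lemma \ref{prims}$(i)$ every proper subquotient of $H$ is primordial, hence --- being of smaller order --- $k$-Dress. Since the proof of Lemma \ref{notk} uses only this last property, it applies to $H$ and yields
\[
H=\langle x\rangle\rtimes\langle y\rangle,\qquad |\langle x\rangle|=r,\quad |\langle y\rangle|=q^n,
\]
with $r\neq q$ primes, $r\neq p$, $yxy^{-1}=x^a$ and $a\in\zze_r^*\smallsetminus I_r(k)$; in particular $n\geq 1$. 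Because $T(H)$ is an ideal of the ring $\gr(kH,\triv)$ by Frobenius reciprocity (fact $(b)$ in the proof of Lemma \ref{prims}) and $[k]$ is its unit, $H$ being primordial is equivalent to $[k]\notin T(H)$; so it suffices to write $[k]$ as a $\zze$-combination of classes $[V]\ind{L}{H}$ with $L<H$ and $V$ a trivial source $kL$-module.

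First I would dispose of the case $q\neq p$. Then $|H|=rq^n$ is prime to $p$, so for every $L\leq H$ the algebra $kL$ is semisimple, every $kL$-module has trivial source, and the natural map $\gr(kL,\triv)=\gr(kL)\to G_0(kL)$ is an isomorphism commuting with induction (as recorded in the proof of Lemma \ref{prims}). Hence $H$ is primordial for $\gr(k\,\_\,,\triv)$ if and only if it is primordial for $G_0(k\,\_\,)$; but $\pp(G_0(k\,\_\,))=\mathcal{E}_k$, and then $H$, having $O_p(H)=1$, is $k$-elementary, hence $k$-Dress --- contradicting $H\notin Dr_k$.

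The case $q=p$, where $\langle y\rangle$ is a Sylow $p$-subgroup of $H$, is the heart of the matter. Minimality of $H$ first forces the action of $\langle y\rangle$ on $\langle x\rangle$ to be faithful: otherwise its kernel is a nontrivial normal subgroup and the proper quotient $\langle x\rangle\rtimes(\langle y\rangle/\mathrm{kernel})$ --- faithful action, trivial $O_p$, action outside $I_r(k)$ --- is a non-$k$-Dress subquotient. Consider $k[H/\langle y\rangle]=k\ind{\langle y\rangle}{H}$; since $p\nmid|H/\langle y\rangle|=r$ its trivial submodule splits off, $k[H/\langle y\rangle]=k_H\oplus N$. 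Now $\langle x\rangle$ acts simply transitively on the $r$-element set $H/\langle y\rangle$, while by faithfulness $\langle y\rangle$ acts on it with one fixed point and all other orbits free; hence $\mathrm{Res}_{\langle x\rangle}k[H/\langle y\rangle]\cong k\langle x\rangle$ and $\mathrm{Res}_{\langle y\rangle}k[H/\langle y\rangle]\cong k\oplus(k\langle y\rangle)^{(r-1)/p^n}$. The second isomorphism shows $\mathrm{Res}_{\langle y\rangle}N$ is free, so $N$ is a projective $kH$-module; the first shows $\mathrm{Res}_{\langle x\rangle}N$ is the multiplicity-free sum of the nontrivial simple $k\langle x\rangle$-modules, which pins down $N=\bigoplus_O Q_O$, where $Q_O$ is the unique projective indecomposable $kH$-module in the block indexed by the $\langle y\rangle$-orbit $O$ of nontrivial simple $k\langle x\rangle$-modules. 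Since $a\notin I_r(k)$, the common stabilizer $U$ in $\langle y\rangle$ of the nontrivial simple $k\langle x\rangle$-modules is a proper subgroup of $\langle y\rangle$, so $K:=\langle x\rangle\rtimes U$ is a proper subgroup of $H$. For a nontrivial simple $k\langle x\rangle$-module $S$, $\mathrm{Ind}_{\langle x\rangle}^{K}S$ is a projective $kK$-module whose endomorphism ring is a crossed product of the field $\mathrm{End}_{k\langle x\rangle}(S)$ by $U$ acting faithfully, hence is simple Artinian; so $\mathrm{Ind}_{\langle x\rangle}^{K}S$ is a sum of copies of a $kK$-module $S'_O$ which, being projective with division ring endomorphisms, is a projective simple module, and inducing up to $H$ gives $\mathrm{Ind}_{K}^{H}S'_O=Q_O$ (the endomorphism ring is unchanged, as $K\trianglelefteq H$ and $\mathrm{Hom}_{kK}(\mathrm{Ind}_{\langle x\rangle}^{K}S,\ {}^{y^j}\mathrm{Ind}_{\langle x\rangle}^{K}S)=0$ for $y^j\notin K$). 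As $S'_O$ is projective it has trivial source, so $[Q_O]=[S'_O]\ind{K}{H}\in T(H)$, whence
\[
[k_H]=[k]\ind{\langle y\rangle}{H}-\sum_O[S'_O]\ind{K}{H}\in T(H),
\]
the desired contradiction, and the proof is complete.

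The hard part is plainly the case $q=p$: it demands a fairly precise grip on the modular representation theory of the $p$-nilpotent group $\langle x\rangle\rtimes\langle y\rangle$, so as to recognise the projective complement $N$ of the trivial module in $k\ind{\langle y\rangle}{H}$ as a sum of trivial source modules induced from the proper subgroup $\langle x\rangle\rtimes U$; the feature that keeps $\langle x\rangle\rtimes U$ proper is exactly the hypothesis $a\notin I_r(k)$. A subsidiary point to handle with care is that the structure given by the proof of Lemma \ref{notk}, together with faithfulness of the action and properness of $U$, really does follow from minimality of $H$ and Lemma \ref{prims}.
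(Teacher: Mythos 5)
Your argument is correct, but it takes a genuinely different route from the paper's in the main step. After invoking Lemma \ref{notk} (with the same remark about its applicability to minimal non-$k$-Dress primordial groups), the paper treats all $q$ uniformly: it extends scalars from $1_Q\ind{Q}{H}$ to $k(\omega)$, diagonalizes the $x$-action via the basis $y_t=\sum_i\omega^{-ti}(x^i\otimes 1_Q)$, and takes $\gal(k(\omega)/k)$-fixed points to exhibit the complement of $k$ explicitly as $\bigoplus_l M_l\ind{A}{H}$, where $A=Stab_H(M_l)$ is proper exactly because $a\notin I_r(k)$ and each $M_l$ visibly has trivial source. You instead split into cases: for $q\neq p$ you reduce to Raggi's theorem $\pp(G_0(k\,\_\,))=\mathcal{E}_k$ via semisimplicity of $kH$ (the paper only uses that equality for the reverse inclusion in Lemma \ref{prims}), and for $q=p$ you use Clifford/block theory of the $p$-nilpotent group $H$ to recognize the complement $N$ of $k$ in $k\ind{\langle y\rangle}{H}$ as projective and equal to $\bigoplus_O\mathrm{Ind}_K^H S'_O$ with $K=\langle x\rangle\rtimes U$ proper because $a\notin I_r(k)$. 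Your $K$ and $S'_O$ are essentially the paper's $A$ and $M_l$, so in the heart of the argument the two proofs produce the same decomposition by different means; the paper's version is more elementary and self-contained (and needs neither your faithfulness reduction nor the case split), while yours yields the extra structural fact that the complement is projective. Two steps you should expand if you write this up: the assertion that $N=\bigoplus_O Q_O$ presupposes that every projective indecomposable of $kH$ other than the principal one is some $Q_O$ with multiplicity-free restriction to $\langle x\rangle$ --- this does follow from your own decomposition $kH=\mathrm{Ind}_{\langle x\rangle}^H(k\langle x\rangle)=P_k\oplus\bigoplus_O Q_O^{m_O|O|}$, but it is stated before it is established; and ``projective with division ring endomorphisms is simple'' uses self-injectivity of $kK$ (in fact simplicity of $S'_O$ is not needed for the conclusion, only its projectivity, hence triviality of its source, and the indecomposability of $\mathrm{Ind}_K^H S'_O$).
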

\begin{proof}
 The proof is by contradiction. We suppose there is a group $H$ of smallest order in $\pp(\gr(k\,\_\, ,\, \triv))$ that is not $k$-Dress.
Observe that the previous lemma is also valid if $H$ satisfies a property 
that is preserved under subgroups and quotients and implies being $q$-Dress.
Since this is the case for the property of being primordial for $\gr(k\,\_\, ,\, \triv)$, the lemma gives us 
$H=C\rtimes Q$ with $C=<x>$ of order $r$ and  $Q=<y>$ of order $q^n$ with 
$r$ and $q$ different primes and $yxy^{-1}=x^a$ with $a\in \zze_r^*\smallsetminus I_r(k)$.

We write $1_H$ to identify the field $k$ as a $kH$-module. We shall prove that $1_H$ is a sum of modules induced from proper subgroups of $H$, contradicting the assumption of primordiality.

The image of $1_Q$ under the induction morphism $1_Q\!\ind{Q}{H}$ is isomorphic, as vector spaces, to $\bigoplus_{i=0}^{r-1}kx^i\otimes_Q1_Q$. 
If $\omega$ is a primitive $r$-th root of unity, 
then the $k(\omega)H$-module $k(w)\otimes 1_Q\ind{Q}{H}$ as $k(w)$-vector space has the basis 
\begin{displaymath}
\{x^i\otimes_Q 1_Q 
\mid i=0,\ldots, r-1 \}. 
\end{displaymath}
 We can define another basis
 $y_t:=\sum_{i=0}^{r-1}\omega^{-ti}(x^i\otimes_Q 1_Q)$
for $t=0,\ldots, r-1$. To prove it is a basis, observe that the matrix 
\begin{displaymath}
 \left(\begin{array}{cccccc}
          1 & 1            & \ldots & 1                & \ldots & 1\\
          1 & \omega^{-1}  & \ldots & \omega^{-j}      & \ldots & \omega^{-(r-1)}\\
     \vdots & \vdots       & \ddots & \vdots           & \ddots & \vdots\\
       1 & \omega^{-(r-1)} & \ldots & \omega^{-j(r-1)} & \ldots & \omega^{-(r-1)^2}
\end{array}\right)
\end{displaymath}
has determinant equal to $\prod_{i\neq j}(\omega^{-i}-\omega^{-j})$, which is different from $0$ since $\omega$ is a $r$-th primitive root of unity. 

$H$ acts on this basis in the following way
\[
\begin{array}{rl}
 xy_t & = \sum\limits_{i=0}^{r-1}\omega^{-ti}(x^{i+1}\otimes_Q 1_Q)\quad \textrm{if j=i+1,}\\
      & = \sum\limits_{j=0}^{r-1}\omega^{-t(j-1)}(x^j\otimes_Q 1_Q) = \omega^ty_t\\
 yy_t & = \sum\limits_{i=0}^{r-1}\omega^{-ti}(yx^i\otimes_Q 1_Q)=\sum\limits_{i=0}^{r-1}\omega^{-ti}(x^{ai}\otimes_Q 1_Q)\\
      & = \sum\limits_{i=0}^{r-1}\omega^{-tbi}(x^i\otimes_Q 1_Q)= y_{t'}
\end{array}
\]
where $b\in \zze_r^*$ is such that $\ov{b}\ov{a}=1$ in $(\zze/r\zze)^*$, and $0\leq t'\leq r-1$ satisfies $t'\equiv tb$ mod $r$.
From these relations we see that $y_0$ is fixed under the action of $H$, so $k(\omega)y_0$ is $k(\omega)H$-isomorphic to $k(\omega)$ and we have
\begin{displaymath}
 k(\omega)\otimes 1_Q\ind{Q}{H}\cong k(\omega)\oplus \Bigg( \sum_{t=1}^{r-1}k(\omega)y_t \Bigg).
\end{displaymath}
It is clear that $\mathcal G=\gal (k(\omega)/k)$ acts on $k(\omega)\otimes 1_Q\ind{Q}{H}$. Taking the fixed points of this action in the isomorphism above,  gives us
\begin{displaymath}
 1_Q\ind{Q}{H}\cong 1_H\oplus \Bigg( \sum_{t=1}^{r-1}k(\omega)y_t \Bigg)^{\mathcal G}.
\end{displaymath}

Let $\sigma$ be in $\mathcal G$, we write $b_{\sigma}\in I_{r}(k)$ for the integer through which $\sigma$ is defined. We have
\begin{displaymath}
 \sigma y_t= \sum_{i=0}^{r-1}\sigma (\omega^{-ti})(x^i\otimes 1)=\sum_{i=0}^{r-1}\omega^{-tb_{\sigma}i}(x^i\otimes 1)=y_{s}
\end{displaymath}
where $0\leq s\leq r-1$ and $s\equiv tb_{\sigma}$ mod $r$. 
If $u=\sum_{t=1}^{r-1}\lambda_ty_t$ is in $( \sum_{t=1}^{r-1}k(\omega)y_t )^{\mathcal G}$, then for each $t\in \zze_r^*$ we must have $\sigma(\lambda_t)=\lambda_{s}$, where $s\in \zze_r^*$ and $s\equiv tb_{\sigma}$ mod $r$.
From this we define the vector spaces
\begin{displaymath}
M_l:=\Bigg\{ \sum_{\sigma \in \mathcal G} \sigma(\lambda_l)y_s\,\Big|\, s\equiv lb_{\sigma}\ \textrm{mod}\ r,\, \lambda_{l}\in k(\omega)\Bigg \}
\end{displaymath}
for each $l\in \zze_r^*$. Observe that $M_{l_1}=M_{l_2}$ if and only if $l_1\equiv l_2b_{\sigma}$ mod $r$ for some $\sigma \in \mathcal G$, this implies that 
\begin{displaymath}
 \Bigg( \sum_{t=1}^{r-1}k(\omega)y_t \Bigg)^{\mathcal G}= \bigoplus_{\substack{l\in \zze_r^*/I_r(k)}}M_l.
\end{displaymath}
We shall prove that the right hand side of this equality is a sum of modules induced from proper subgroups of $H$. We have $xM_l=M_l$ and $yM_l=M_{l'}$ with $l'\in \zze_r^*$ and $l'\equiv lb$ mod $r$. Since $a$ does not belong to $I_r(k)$, neither does $b$, so $M_l$ is never fixed under the action of $y$. Then $M_l$ is not a $kH$-module. Taking the orbits of the action of $y$ we have
\begin{eqnarray*}
 \Bigg( \sum_{t=1}^{r-1}k(\omega)y_t \Bigg)^{\mathcal G} & = & \bigoplus_{\substack{l\in \frac{\zze_r^*/I_r(k)}{\sim}}}\Big(                                                                 \bigoplus _{z\in[H/A]}zM_l\Big)\\
                                                         & = & \bigoplus_{\substack{l\in \frac{\zze_r^*/I_r(k)}{\sim}}} \big(M_l\ind{A}{H}\big)
\end{eqnarray*}
where $A=Stab_H(M_l)$, and $\sim$  represents the action of $y$. 

It is clear that $A$ is a proper subgroup of $H$. Finally, observe that every $M_l$ is a trivial source $kA$-module. If $A=C_r\rtimes<y^d>$, then $M_l$ is a direct summand of the induced module $\big (\sum_{\sigma \in \mathcal G}ky_s\big )\ind{<y^d>}{A}$ , where $s\in \zze_r^*$ and $s\equiv lb_{\sigma}$ mod $r$. Since $ky_s\cong k$, then $M_l$ has trivial source. 
\end{proof}

\section*{Acknowledgements}

We wish to thank  Julio Aguilar Cabrera, whose Master thesis was of great help in writing this article, and Luis Valero Elizondo for helpful conversations.

\bibliographystyle{plain}
\bibliography{prim6}

\begin{thebibliography}{10}

\bibitem{benson}
{D. J.} Benson.
\newblock {\em Representations and cohomology I: Basic representation theory of
  finite groups and associative algebras}.
\newblock Cambridge University Press, United Kingdom, 1995.

\bibitem{boltje}
Robert Boltje.
\newblock Explicit and canonical {D}ress induction.
\newblock {\em Algebr. Represent. Theory}, 8:731--746, 2005.

\bibitem{bouc}
Serge Bouc.
\newblock Foncteurs d'ensembles munis d'une double action.
\newblock {\em Journal of Algebra}, 183:664--736, 1996.

\bibitem{coskun}
Olcay Co\c{s}kun.
\newblock Mackey functors, induction from restriction functors and coinduction
  from transfer functors.
\newblock {\em Journal of Algebra}, 315:224--248, 2007.

\bibitem{curtis}
{C. W.} Curtis and {I.} Reiner.
\newblock {\em Methods of representation theory. With applications to finite
  groups and orders}.
\newblock Volume II. John Wiley and sons, U.S.A, 1981.

\bibitem{dress}
Andreas Dress.
\newblock {\em Contributions to the theory of induced representations,
  Algebraic K-Theory II}.
\newblock Springer Lecture Notes 342, Berlin, 1973.

\bibitem{raggi}
Alberto Raggi.
\newblock Primordial subgroups for mod(k{G}).
\newblock {\em Journal of Algebra}, 139:155--158, 1991.

\bibitem{thev}
Jacques Th\'evenaz.
\newblock Some remarks on {G}-functors and the {B}rauer morphism.
\newblock {\em Journal f\"ur die reine und angewandte Mathematik}, 384:24--56,
  1988.

\bibitem{webb}
Peter Webb.
\newblock Stratifications and {M}ackey {F}unctors {II}: Globally defined
  {M}ackey {F}unctors.
\newblock {\em Preprint}, 2008.

\bibitem{yoshida}
Tomoyuki Yoshida.
\newblock Idempotents of {B}urnside rings and {D}ress induction theorem.
\newblock {\em Journal of Algebra}, 80:90--105, 1983.

\end{thebibliography}

\end{document}